\documentclass{article}
\usepackage[utf8]{inputenc} 
\usepackage[T1]{fontenc}    
\usepackage{hyperref}       
\usepackage{url}            
\usepackage{booktabs}       
\usepackage{amsfonts}       
\usepackage{nicefrac}       
\usepackage{microtype}      
\usepackage{graphicx}
\usepackage{ifthen}
\usepackage{tikz}
\usepackage{eso-pic}
\usepackage{color}
\usepackage{float}
\usepackage{multicol}
\usepackage{setspace}
\usepackage{mathptmx}
\usepackage[theorems]{tcolorbox}
\usepackage{mdframed}
\usepackage{amsmath,amsthm,amssymb,mathtools,flexisym,bbm}
\usepackage{xcolor,paralist,hyperref,fancyhdr,etoolbox}
\usepackage{mathrsfs}
\usepackage{pgfplots}
\usepackage{xcolor}
\usepackage[shortlabels]{enumitem}
\usepackage{algorithm}
\usepackage[noend]{algpseudocode}
\usepackage{cite}
\usepackage{multirow}
\usepackage[font=scriptsize]{caption}
\usepackage{subcaption}
\usepackage{comment}
\usepackage{xspace}
\usepackage{environ}
\usepackage{longtable}
\usepackage{indentfirst,csquotes}
\usepackage{array}

\usepackage[margin=1in]{geometry}

\newtheorem{theorem}{Theorem}[section]

\newtheorem{example}[theorem]{Example}

\newtheorem{proposition}[theorem]{Proposition}

\newtheorem{remark}[theorem]{Remark}

\newcolumntype{H}{>{\setbox0=\hbox\bgroup}c<{\egroup}@{}}
\hypersetup{colorlinks=true, linkcolor=black, filecolor=black, urlcolor=black }

\newcommand{\R}{\mathbb{R}}

\newcommand{\E}{\mathbb{E}}

\DeclareMathOperator{\pes}{pes}

\newcommand{\st}{\text{s.t.}}

\allowdisplaybreaks            
\AtBeginDocument{%
  \setlength{\abovedisplayskip}{4pt}
  \setlength{\belowdisplayskip}{4pt}
  \setlength{\abovedisplayshortskip}{4pt}
  \setlength{\belowdisplayshortskip}{4pt}
}

\tikzset{every picture/.style={line width=0.75pt}} 
\title{Dynamic Interval Scheduling with Random Start and End Times\thanks{Partially funded by the U.S. Office of Naval Research, N00014-23-1-2631.}} 
\author{Rui Gong\thanks{
Corresponding author (rgong44@gatech.edu)\\
\indent~~ Georgia Institute of Technology, Atlanta, GA 30332, USA\\
\indent~~ The authors' work was partially funded by the U.S.\ Office of Naval Research, N00014-23-1-2631.} \and 
Alejandro Toriello}

\begin{document}
\maketitle
\begin{abstract}
We study sequential interval scheduling with random task start and end times. Task weights and discrete start and end time distributions are given but the actual times are revealed only upon commitment; this also eliminates tasks that conflict with the committed task.
The objective is to maximize the expected weight of a conflict-free schedule.
We propose two models that differ in how conflicts are enforced, develop relaxations and bounds for each, and present a computational study.
\end{abstract}

\section{Introduction}
The \emph{interval scheduling problem} is classical in operations research, industrial engineering and computer science.
It considers a set of tasks $N:=[n] = \{1, \dotsc, n\}$ and consecutive time slots $M:=[m]$.
Each task is represented by an interval $[s_i,e_i]$ describing the time slots it occupies if scheduled.
The \emph{maximum interval scheduling problem} seeks a set of pairwise nonoverlapping intervals (a schedule) of maximum total weight.
When all tasks have the same weight, the greedy rule that repeatedly selects the earliest-ending task and discards conflicts is optimal;
the weighted version is also solvable in $\Theta(n)$ time \cite{Kleinberg+Tardos:06a}.

An \emph{interval graph} is an undirected graph representing intervals on a line, where each vertex corresponds to an interval and an edge connects two vertices if their intervals overlap.
A feasible schedule corresponds exactly to an independent set of the associated interval graph.
Since interval graphs are perfect, the maximum interval scheduling problem can be represented by the clique primal-dual LP pair of the interval graph $G=(N,E)$,
\begin{multicols}{2}
    \noindent
    \begin{align}
\label{LP-P}
\tag{LP-P}
\begin{split}
    \max_{x\geq 0}\quad &w^\top x\\
    \st\quad &\sum_{i\in C}x_i\leq 1, \forall C\in\mathcal{C}(G)
\end{split}
\end{align}
\columnbreak
\begin{align}
\label{LP-D}
\tag{LP-D}
\begin{split}
    \min_{\mu\geq0}\quad &\sum_{C\in\mathcal{C}(G)}\mu_C\\
    \text{s.t.}\quad &\sum_{C\ni i}\mu_C\geq w_i,~\forall i\in N,
\end{split}
\end{align}
\end{multicols}
\noindent
where $\mathcal{C}(G)$ is the set of all maximal cliques of $G$.
\eqref{LP-P} formulates the maximum independent set problem and \eqref{LP-D} formulates the corresponding minimum clique cover problem.
For arbitrary perfect graphs, \eqref{LP-P} and \eqref{LP-D} are tight formulations, but not guaranteed to be solvable in polynomial time because there are possibly exponentially many maximal cliques.
In contrast, in interval graphs the number of maximal cliques is at most $\min\{m, n\}$.
Thus the deterministic maximum interval scheduling problem can be solved efficiently either directly or via these LP relaxations.
Our goal is to extend this setting to incorporate uncertainty and sequential decisions while retaining tractable relaxations and algorithms.

Several interval scheduling models under uncertainty have been studied.
A classical one is \emph{online interval scheduling}~\cite{Lipton1994OnlineIS}, where intervals of fixed length are presented to the scheduler in the order of their start times and must be accepted or rejected irrevocably.
The objective is to maximize total weight while maintaining feasibility.
Recent work includes online interval scheduling with predictions of upcoming intervals~\cite{boyar2025onlineintervalschedulingpredictions} and models that allow overlaps but maximize a satisfaction objective~\cite{KOBAYASHI2020673}.
Other variants include interval scheduling with random delays~\cite{BRANDA2024106542} and settings where tasks may depart unexpectedly and service times are stochastic~\cite{xu2024stochasticschedulingabandonmentsgreedy}.

We study the setting where the set of tasks and their weights are known in advance but task start and end times are random.
The scheduler commits to tasks sequentially, learns realized intervals as tasks are committed, and seeks to maximize the expected total weight.

\section{Dynamic Scheduling with Random Start and End Times}
\label{section:DSRSE}
We again let $M=[m]$ be the set of slots, $N=[n]$ the set of tasks, and $w\in\R_{+}^{N}$ the task weights. 
For each task $i\in N$, let $D_i^s,D_i^e$ be the independent discrete distributions of $i$'s start and end times, and denote $S_i,E_i\subseteq M$ as their supports.
We assume $\max S_i\leq \min E_i$ and define 
$D:=(D_i^*)$, for $*\in\{s,e\},i\in N$. When task $i$ is scheduled, its start and end times $s_i,e_i$ are realized according to $D_i^s$ and $D_i^e$. 
For any other task $j$, we compute the probability $p_{ij}$ that it conflicts with $i$ based on $s_i$, $e_i$, $D_j^s$ and $D_j^e$, and draw $B_{ij}\sim\operatorname{Bernoulli}(p_{ij})$.
If $B_{ij}=1$, task $j$ is deleted; otherwise, it remains available and its distributions are updated to condition on the slots occupied by $i$, $[s_i, e_i]$. 
The goal is to schedule tasks sequentially and maximize the expected total weight of scheduled tasks.
We call this problem \emph{dynamic scheduling with random start and end times (DSRSE)}.

One motivating application for this model is the scheduling of jobs submitted by independent agents to a shared cloud computing platform. Before committing to a job, the scheduler may know only a distribution over the job’s start and completion times, estimated from historical data or information provided by the client. The exact interval is not known in advance and may depend on final resource allocation, runtime conditions, or client-side information that is revealed only after acceptance. Once the scheduler accepts a job, its realized interval becomes part of the observable schedule, allowing other clients to determine whether their own jobs remain compatible with the current commitments and to decide whether to stay or withdraw. From the scheduler’s perspective, accepting a job therefore not only earns its reward but also changes the future set of available jobs: each remaining client may leave with a probability determined by the likelihood that its job conflicts with the committed interval. This leads naturally to a dynamic and stochastic interval scheduling problem in which the scheduler seeks to maximize the expected total weight of accepted, mutually compatible jobs.

DSRSE can be formulated as a dynamic program with the Bellman recursion
\[
V^*(N,M,w,D):=\max_i\{w_i+\E[V^*(N',M',w,D')]\},
\]
where 
$V^*$ is the optimal value function and $N',M',w,D'$ are the random variables representing the resulting state after scheduling task $i$.
Like many dynamic programming problems, this problem has exponentially many states and each state has $O(n)$ actions, so it is unreasonable to solve it directly;
moreover, the problem is NP-hard.
\begin{proposition}
    DSRSE is NP-hard.
\end{proposition}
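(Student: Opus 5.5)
The plan is a polynomial reduction from a known NP-hard problem. Since the deterministic special case (all $D_i^s,D_i^e$ point masses) is the classical maximum interval scheduling problem solvable by \eqref{LP-P}, any hardness has to come from the randomness and the conditioning/deletion mechanism. My first choice of source problem is \emph{maximum independent set} (MIS) in a general graph $H=(V,F)$. I would build a DSRSE instance with one task per vertex of $H$, all of weight $1$. The construction should make two things hold:
\begin{itemize}
\item[(i)] the realized interval of any task adjacent in $H$ to a committed task conflicts with it with probability close to $1$;
\item[(ii)] non-adjacent tasks conflict with probability close to $0$.
\end{itemize}
If this works, the optimal expected value $V^*$ lies within a small additive error of $\alpha(H)$. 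Choosing the error below $1/2$ then lets us read off $\alpha(H)$ by rounding, which proves the proposition.

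The gadget is as follows. Give each edge $f=\{u,v\}\in F$ a block of private slots far from all other blocks. Let task $v$'s start support $S_v$ and end support $E_v$ be two-point or few-point distributions. These are arranged so that, with high probability, the realized interval of $v$ covers a designated slot in the block of each incident edge, and with small probability $\varepsilon$ it shrinks. The key point is that adjacency is tested using the conflict probability $p_{ij}$ computed from $[s_i,e_i]$ and $D_j$. This probability is the chance that $j$'s \emph{random} interval meets $[s_i,e_i]$, so we only need the distribution of $j$, not a deterministic interval, to hit $i$'s realized interval.

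The ordering of slots along the line will be chosen so that each vertex's likely interval touches exactly the edge blocks incident to it, padded with probability-negligible tails. The main obstacle is precisely this step, because intervals are contiguous. A vertex whose incident edge blocks are spread out would have an interval that passes through blocks of non-incident edges, creating spurious conflicts. I would try two remedies in order:
\begin{itemize}
\item[(a)] Reduce from MIS restricted to a graph class with a suitable linear layout, for instance after subdividing edges, which preserves hardness with a known shift of $\alpha$. The aim is that each vertex's incident blocks become consecutive.
\item[(b)] If (a) fails, exploit the conditioning rule. A task that survives has its distribution conditioned on avoiding $[s_i,e_i]$, which can push its mass into a region that is then forced to collide with specific later tasks.
\end{itemize}
If neither embedding works cleanly, my fallback is a number problem such as \textsc{Partition}. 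One long task with a multi-point end distribution would encode a random threshold, and short deterministic tasks with weights $a_k$ would be packed before it. The aim there is that the value of an optimal adaptive policy exceeds a computable threshold iff a balanced partition exists.

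Finally, I would verify three things. First, the instance has size polynomial in $|V|+|F|$ with probabilities of polynomial bit length. Second, the error terms are controlled by a union bound: at most $n$ commitments are made, each with spurious-conflict or missed-conflict probability at most $\varepsilon$, so choosing $\varepsilon<1/(2n^2)$ keeps the total deviation of $V^*$ from $\alpha(H)$ below $1/2$. Third, any adaptive policy's expected value is at most $\alpha(H)$ plus this error. The last point holds because on the high-probability event the committed set is independent in $H$. Conversely, committing to a maximum independent set in any order achieves at least $\alpha(H)$ minus the error.
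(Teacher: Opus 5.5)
\paragraph{The main gadget cannot exist.} Write $I_v=[s_v,e_v]$ for the random interval of task $v$. The task distributions are independent, and when $u$ is committed the conflict probability for $v$ is $\Pr[I_v\cap[s_u,e_u]\neq\emptyset]$ taken over $D_v$. Your requirements (i)--(ii), with the error budget $\varepsilon<1/(2n^2)$ you chose, therefore say that for every pair $\{u,v\}$ the event $\{I_u\cap I_v\neq\emptyset\}$ disagrees with $\{uv\in F\}$ with probability at most $\varepsilon$. A union bound over the $\binom{n}{2}$ pairs gives total failure probability below $1/4$. So with positive probability a single joint realization of all intervals has intersection graph exactly $H$. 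Every $H$ your gadget can encode is thus an interval graph, where MIS is polynomial (e.g.\ via \eqref{LP-P}), and the reduction proves nothing.

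This is your own opening observation, that the deterministic case is easy, applied to near-deterministic instances. Forcing every conflict probability to be near $0$ or $1$ removes exactly the randomness that hardness must come from. Remedy (a) is therefore self-defeating: making every vertex's incident blocks consecutive is precisely asking $H$ to be an interval graph, and subdividing the edges of any graph with a cycle creates an induced cycle of length at least $4$, which interval graphs never have. Remedy (b) is only a hope. As long as (i)--(ii) are imposed on the original distributions, the union-bound argument applies before any conditioning occurs. If they are dropped, your estimate $|V^*-\alpha(H)|<1/2$ no longer goes through.

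\paragraph{The Partition fallback.} This is the right source problem, but it is not worked out, and as sketched it puts the randomness on the wrong side. With one random long task and deterministic short tasks, every conflict involving the long task is driven by its own start and end times. Hence the probability that it survives the commitment of a set $S$ of short tasks depends only on the extreme positions in $S$. There is no product $\prod_{i\in S}q_i$ of independent survival probabilities, and that product is what makes the partition objective hard.

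\paragraph{The missing idea.} You need many independent, genuinely random conflicts. The paper gets them by reducing from the dynamic stable set problem on a star, which is NP-hard \cite{DNP}:
\begin{itemize}
\item Task $0$ deterministically occupies slots $1,\ldots,n$.
\item Task $i\geq 1$ occupies slot $i$ with probability $p_i$ and a private slot $n+i$ otherwise.
\end{itemize}
Leaves never conflict with each other, and the center conflicts with leaf $i$ independently with probability $p_i$. A surviving leaf is conditioned onto slot $n+i$ and never conflicts again. So the instance is exactly the star problem. A policy is determined by the set $S$ of leaves committed before the center, and its value
\[
\sum_{i\in S}w_i+\prod_{i\in S}q_i\Bigl(w_0+\sum_{j\notin S}q_jw_j\Bigr)+\Bigl(1-\prod_{i\in S}q_i\Bigr)\sum_{j\notin S}w_j
\]
has the product structure that a Partition argument exploits.
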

\begin{proof}
    Suppose there are $n+1$ tasks and $2n$ slots.
    Let $0$ be a task taking positions $1,\ldots,n$ with probability $1$, and task $i$ taking position $i$ with probability $p_i$ and $n+i$ with probability $(1-p_i)$.
    Then this problem is equivalent to the dynamic maximum stable set problem over a star graph, where $0$ is the center;
    this problem is NP-hard~\cite{DNP}.
\end{proof}
\begin{example}
\label{eg:1}
    Consider an instance with $N=[2]$, $M=[3]$, and unit weights, where $D_1^s=\mathbbm{1}_{\{1\}}$, $D_1^e=\mathbbm{1}_{\{2\}}$, $D_2^s=U(\{2,3\})$, $D_2^e=\mathbbm{1}_{\{3\}}$.
If the scheduler commits to task $1$ first, then task $1$ occupies $[1,2]$.
If task $1$ is scheduled, task $2$ has a probability of $1/2$ of conflicting and $1/2$ of remaining.
If it remains, task $2$'s distributions are updated to condition on the fact that it does not conflict with task 1: $D_2^s=\mathbbm{1}_{\{3\}},D_2^e=\mathbbm{1}_{\{3\}}$.
On the other hand, if task $2$ is scheduled first and its start time realizes to $2$, task $1$ is deleted; if it realizes to $3$, task $1$ remains.
\end{example}
When $s_i,e_i$ are deterministic for every task, DSRSE reduces to the maximum independent set problem of the corresponding interval graph.
As discussed above, it can be represented by~\eqref{LP-P} and~\eqref{LP-D}, so the deterministic problem can be solved efficiently via these LP relaxations.
For an interval graph, maximal cliques are elements of $M$, so the constraint $\sum_{i\in C}x_i\leq 1,\forall C\in\mathcal{C}(G)$ in~\eqref{LP-P} simply enforces that each slot is occupied by at most one task.
We consider analogous constraints for DSRSE.

\subsection{Relaxations}

For DSRSE, consider $P^s,P^e,P^o\in\R^{n\times m}$, where $P_{ik}^s,P_{ik}^e,P_{ik}^o$ are the probabilities that task $i$ starts at, ends at, and occupies slot $k$, respectively.
Let $T=[\min\{m,n\}]$ be the stages of the dynamic program.
For any policy, let $x_{ik}^{ts},x_{ik}^{te},x_{ik}^{to}$ be the probabilities that $i$ is scheduled and starts at, ends at, or occupies slot $k$ at stage $t$, respectively.
Let $x_i^t = \sum_{k=1}^m x_{ik}^{ts}=\sum_{k=1}^{m} x_{ik}^{te}$ be the probability of scheduling $i$ at stage $t$.
The objective can then be written as $ \max \sum_{t}\sum_i w_i\, x_{i}^t $.
Analogous to the deterministic case, we require that the probability a slot is occupied is at most one.
We relax DSRSE to
\begin{align*}
    \max~&\sum_{t}\sum_i w_i\, x_{i}^{t}\\
    &x_i^t =\sum_{k=1}^{m}x_{ik}^{ts}=\sum_{k=1}^{m}x_{ik}^{te}, \forall i,\forall t\in T\\
    &x_{ik}^{ts}=0,\forall t\in T,\forall k\notin S_i\\
    &x_{ik}^{te}=0,\forall t\in T,\forall k\notin E_i\\
    &\sum_t\sum_i x_{ik}^{to}\leq 1,\forall k\in M\\
    &\sum_{t}x_{ik}^{ts}\leq P_{ik}^{s},~\sum_{t}x_{ik}^{te}\leq P_{ik}^{e}~\forall i\in N,\forall k\in M\\
    &0\leq x_{i}^{t}, x_{ik}^{ts},x_{ik}^{te}, x_{ik}^{to},
\end{align*}
where $x_{ik}^s:=\sum_{t}x_{ik}^{ts}\leq P_{ik}^s$ indicates that the probability of $i$ being scheduled and starting at $k$ is at most the marginal probability $P_{ik}^s$, and similarly for $x_{ik}^{e}:=\sum_t x_{ik}^{te}\leq P_{ik}^e$.
For each $k\in M$, we can rewrite $\sum_t\sum_i x_{ik}^{to}\leq 1$ as
\begin{align}
    &\sum_{t}\sum_{i}x_{ik}^{to}\notag \\
=&\sum_{t} \left(\sum_{i,a_i\leq k\leq b_i}x_{ik}^{to}+\sum_{i,c_i\leq k\leq d_i}x_{ik}^{to}+\sum_{i,b_i< k< c_i}x_{ik}^{to}\right) \notag \\
= &\sum_{t} \left(\sum_{i,a_i\leq k\leq b_i-1}\sum_{\ell = a_i}^{k}x_{i\ell}^{ts}+\sum_{i,c_i+1\leq k\leq d_i}\sum_{\ell =k}^{d_i}x_{i\ell}^{te}+\sum_{i,b_i\leq k\leq c_i}x_{i}^{t}\right)  \notag \\
=&\sum_{t}\sum_{i, b_i\leq k\leq c_i} x_i^t+\sum_t \left(\sum_{i,a_i\leq k\leq b_i-1}\sum_{\ell = a_i}^{k}x_{i\ell}^{ts}+\sum_{i,c_i+1\leq k\leq d_i}\sum_{\ell =k}^{d_i}x_{i\ell}^{te}\right)\notag \\
\leq &1,  \label{eq:1}
\end{align}
where $a_i:=\min\{k:k\in S_i\},b_i:=\max\{k:k\in S_i\},c_i:=\min\{k:k\in E_i\},d_i:=\max\{k:k\in E_i\}$.
The first equality comes from the fact that the probability $i$ that occupies a slot $k$ not in $S_i\cup E_i\cup (b_i,c_i)$ is $0$.
For the second equality, by definition, any $k\in [b_i,c_i]$ is always occupied by $i$ if it is scheduled, so $x_{ik}^{to}=x_{i}^t$; for $k\in[a_i,b_i)$, the probability of $i$ being scheduled and occupying $k$ at $t$ is equivalent to the probability of $i$ being scheduled and starting at or before $k$ at $t$, and similarly for the ending slots.
The third equality is derived by rearranging terms.
Since $\sum_{i,a_i\leq k\leq b_i-1}\sum_{\ell = a_i}^{k}x_{i\ell}^{ts} = \sum_{i,a_i\leq k\leq b_i-1}(x_i^t-\sum_{\ell = k+1}^{b_i}x_{i\ell}^{ts})$, and similarly for $x_{i\ell}^{te}$, we have for every $k\in M$,
\begin{equation}
    \label{eq:2}
    \sum_{t}\left(\sum_{i, a_i\leq k\leq d_i} x_i^t-\sum_{i,a_i\leq k\leq b_i}\sum_{\ell = k+1}^{b_i}x_{i\ell}^{ts}-\sum_{i,c_i\leq k\leq d_i}\sum_{\ell =c_i}^{k-1}x_{i\ell}^{te}\right)\leq 1.
\end{equation}
Then we have the following primal-dual pair:
    \begin{align*}
    \label{DLP-P}
    \tag{DLP-P}
    \begin{split}
            \max_{x\geq 0}~&\sum_{t}\sum_i w_i\, x_{i}^{t}\\
    &x_i^t =\sum_{k=1}^{m}x_{ik}^{ts}=\sum_{k=1}^{m}x_{ik}^{te}, \forall i\in N,\forall t\in T\\
    &x_{ik}^{ts}=0,\forall t\in T,\forall k\notin [a_i,b_i]\\
    &x_{ik}^{te}=0,\forall t\in T,\forall k\notin [c_i,d_i]\\
& \sum_{t}\biggl(\sum_{i, a_i\leq k\leq d_i} x_i^t 
  - \sum_{i,a_i\leq k\leq b_i}\sum_{\ell = k+1}^{b_i}x_{i\ell}^{ts} \nonumber \\
& \qquad\quad - \sum_{i,c_i\leq k\leq d_i}\sum_{\ell =c_i}^{k-1}x_{i\ell}^{te}\biggr) \leq 1,
  ~\forall k\in M \\
    &\sum_{t}x_{ik}^{ts}\leq P_{ik}^{s},~\sum_{t}x_{ik}^{te}\leq P_{ik}^{e},\forall i\in N,\forall k\in M\\
    \end{split}
\end{align*}
\begin{align*}
\label{DLP-D}
    \tag{DLP-D}
\begin{split}
\min_{\mu,\nu\geq 0,y,z}\quad 
& \sum_{r=1}^{m} \mu_r
  + \sum_{i=1}^{n}\sum_{k\in S_i} P^s_{ik}\,\nu^s_{ik}
  + \sum_{i=1}^{n}\sum_{k\in E_i} P^e_{ik}\,\nu^e_{ik}\\
\text{s.t.}\quad 
& y_{it} + z_{it} + \sum_{r=a_i}^{d_i} \mu_r \geq w_i,
\forall\, i\in N,\ \forall\, t\in T, \\
& -y_{it} -\sum_{r=a_i}^{k-1} \mu_r+\nu^s_{ik}\geq0,\forall\, i\in N,\ \forall\, t\in T,\ \forall\, k\in S_i, \\
& -z_{it}-\sum_{r=k+1}^{d_i} \mu_r+\nu^e_{ik}\geq 0,\forall\, i\in N,\ \forall\, t\in T,\ \forall\, k\in E_i,
\end{split}
\end{align*}
where $\nu$ corresponds to the constraints $\sum_{t}x_{ik}^{ts}\leq P_{ik}^{s},~\sum_{t}x_{ik}^{te}\leq P_{ik}^{e}$; $y,z$ correspond to 
$x_i^t =\sum_{k=1}^{m}x_{ik}^{ts}=\sum_{k=1}^{m}x_{ik}^{te}$ and $\mu$ correspond to the remaining constraints indexed by $M$.
We drop the dual variables corresponding to $x_{ik}^{ts}=0, x_{ik}^{te}=0$, since they are free variables with zero objective coefficients.

By setting $y,z$ to $0$,~\eqref{DLP-D} is equivalent to,
\begin{align*}
\tag{LP-Dpes}
\label{LP-Dpes}
\min_{\mu\geq 0}\quad 
& \sum_{r=1}^{m} \mu_r
  + \sum_{i=1}^{n}\sum_{k\in S_i} P^s_{ik}\,\sum_{r=a_i}^{k-1} \mu_r
  + \sum_{i=1}^{n}\sum_{k\in E_i} P^e_{ik}\,\sum_{r=k+1}^{d_i} \mu_r\\
\text{s.t.}\quad 
&\sum_{r=a_i}^{d_i} \mu_r \geq w_i,
~\forall\, i\in N.
\end{align*}
Consider the pessimistic interval graph $G_{\pes}$, where each task $i$ occupies $[a_i,d_i]$, the widest possible interval, and the corresponding LP \eqref{LP-D} defined on $G_{\pes}$; 
let $\mu^{\operatorname{LP-D}}_{\pes}$ be its optimal solution, and $\alpha_{\pes}:=\alpha(G_{\pes})=\sum_{r}[\mu^{\operatorname{LP-D}}_{\pes}]_r$ its optimal value.
Since $\mu = \mu^{\operatorname{LP-D}}_{\pes}, y=z=0$ is feasible for \eqref{DLP-D}, the optimal value of \eqref{DLP-D}, $\alpha^*$, is at most \[\alpha_{\pes}+\sum_{i=1}^{n}\sum_{k\in S_i} P^s_{ik}\,\sum_{r=a_i}^{k-1} {[\mu_{\pes}]}_r
  + \sum_{i=1}^{n}\sum_{k\in E_i} P^e_{ik}\,\sum_{r=k+1}^{d_i} {[\mu_{\pes}]}_r,\] where
\begin{align*}
\sum_{i=1}^{n}\sum_{k\in S_i} P^s_{ik}\, \sum_{r=a_i}^{k-1}{[\mu_{\pes}]}_r = & \sum_{r}{[\mu_{\pes}]}_r\sum_{i}\sum_{k,r\in S_i, k>r}P_{ik}^s\\
=&\sum_r {[\mu_{\pes}]}_r \sum_{i\text{ s.t. }r\in S_i} (1-P_{ir}^o),
\end{align*}
and
\[\sum_{i=1}^{n}\sum_{k\in E_i} P^e_{ik}\,\sum_{r=k+1}^{d_i} {[\mu_{\pes}]}_r= \sum_{r}{[\mu_{\pes}]}_r\sum_{i\text{ s.t. }r\in E_i}(1-P_{ir}^o).\]
Thus,
\begin{align*}
\sum_r [\mu_{\pes}]_r\leq & \alpha^*\\
\leq &\sum_r [\mu_{\pes}]_r\left(1+\sum_{i\text{ s.t. }r\in S_i} (1-P_{ir}^o)+\sum_{i\text{ s.t. }r\in E_i} (1-P_{ir}^o)\right).
\end{align*}
The bound is tight when the problem is deterministic because for every $r\in M$, $\sum_{i\text{ s.t. }r\in S_i} (1-P_{ir}^o) = \sum_{i\text{ s.t. }r\in E_i} (1-P_{ir}^o) = 0$.

We conclude this section by considering the special case where tasks only occupy one slot, $ s_i = e_i $.
\begin{proposition}
    If each task occupies only a single slot, a greedy policy is optimal.
\end{proposition}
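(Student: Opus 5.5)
The plan is to show that the greedy policy that always commits to a remaining task of maximum weight (ties broken arbitrarily) attains a pathwise upper bound that holds for every policy. The key step is a \emph{coupling} argument. When $s_i=e_i$, each task $i$ is described by a single distribution $D_i$ on its support $S_i$. I would sample, independently and in advance, a hidden slot $\sigma_i\sim D_i$ for every task. Scheduling $i$ then reveals $\sigma_i$ and deletes exactly those remaining tasks $j$ with $\sigma_j=\sigma_i$.

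The first thing to verify is that this pre-sampled process has the same law as DSRSE. Consider any history of committed tasks. Conditional on the history, the hidden slot of a surviving task $j$ has distribution $D_j$ conditioned on avoiding the slots already occupied, which is exactly the updated distribution in the model. Given this conditioning, the event $\sigma_j=k$ for the newly occupied slot $k$ has probability equal to the model's conflict probability $p_{ij}$, and on its complement the conditional law is again the updated one. An induction on the stage $t$ then shows that, for every (adaptive) policy, the distribution of the trajectory is identical in both descriptions. This lets us compare policies realization by realization.

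Fix a realization $\sigma$ and let $N_k:=\{i:\sigma_i=k\}$. Any policy schedules at most one task from each $N_k$: once a task in $N_k$ is committed, all other members of $N_k$ are deleted, and tasks outside $N_k$ never occupy $k$. Hence every policy collects at most $\sum_{k:N_k\neq\emptyset}\max_{i\in N_k}w_i$ on this realization. Taking expectations gives an upper bound on $V^*$.

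Next I would show that greedy attains this bound pathwise. A task in $N_k$ can only be deleted when some task of $N_k$ is committed, so the first task of $N_k$ that greedy commits is chosen while all of $N_k$ is still available. Since greedy picks a maximum-weight task among all remaining tasks, this committed task has weight $\max_{i\in N_k}w_i$. Moreover, greedy never stops while tasks remain, so every nonempty $N_k$ eventually has a member committed. Thus greedy earns exactly the bound on every realization, and therefore optimality holds in expectation.

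The main obstacle is making the coupling rigorous. One must check carefully that conditioning on "no conflict" at each step composes correctly, so that sequential Bernoulli deletions with updated distributions coincide with pre-sampled independent slots. This holds even though the policy's choices depend on past revelations. I would handle it by writing the joint law of $(\sigma_{i_1},\dots,\sigma_{i_t})$ and the survival indicators under both descriptions and matching them stage by stage via Bayes' rule. Independence of the $D_i$ across tasks is the essential ingredient here.
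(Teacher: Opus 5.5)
Your proposal is correct and takes the same route as the paper: pre-sample every task's slot (the paper's ``adversary''), bound any policy on each realization by $\sum_{k}\max_{i\in N_k} w_i$, and show that the max-weight greedy policy attains this bound on every realization. Where the paper only asserts these steps, you also verify that the pre-sampled process reproduces the sequential Bernoulli deletions with conditioned distributions, and you argue that the first committed member of each $N_k$ is chosen while all of $N_k$ is still available.
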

\begin{proof}
    We use an adversarial argument.
    Consider an adversary that places tasks $N$ into $[m]$ positions in advance according to their distributions $D_i^s,D_i^e$ for each $i\in N$.
    Since the length of each task is $1$, if the positions of the tasks are known, it is optimal to pick the largest weighted task of each position.
    Without loss of generality, assume the weights of the largest weighted task of each position to be $w_1\geq w_2\geq \ldots w_m\geq 0$.
    Without knowing the positions, the greedy policy would pick $w_1,w_2,\ldots, w_m$ which is equivalent to the case where positions are known.
    Thus, for any realization, the greedy policy returns the set of non-conflicting tasks with the maximum weights.
\end{proof}

\section{Conservative DSRSE}
\label{section:Conservatice DSRSE}

In this section, we consider another version of DSRSE, where a task is deleted whenever it possibly conflicts with the committed tasks.
In the original DSRSE model, a remaining unscheduled task's start and end time distributions are updated throughout the horizon depending on the tasks that have already been committed; 
that is, these distributions evolve with each stage of the decision process. 
Modeling this dependence exactly requires keeping track of the distribution induced by every possible set of committed tasks, which becomes intractable as the number of tasks increases.
With this motivation, we consider a conservative case in which the start-time and end-time distributions are not updated after each decision. 
This simplification removes the stage dependence of the distributions and yields a simpler dynamic model.

As in DSRSE, we have $M,N,w,D$. 
Once start and end times $s_i,e_i$ for a committed task are determined, for any remaining task $j$, instead of flipping a coin, we delete $j$ if any slot in its support is occupied by committed tasks; otherwise, we keep $j$.
The goal remains to choose tasks sequentially and maximize the expected total weight of scheduled tasks.
We call this problem \emph{conservative dynamic scheduling with random start and end times (CDSRSE)}.

\begin{example}
\label{eg:2}
    Consider an instance with $N=[2]$, $M=[3]$, and unit weights, where $D_1^s=\mathbbm{1}_{\{1\}}$, $D_1^e=\mathbbm{1}_{\{2\}}$, $D_2^s=U(\{2,3\})$, $D_2^e=\mathbbm{1}_{\{3\}}$. 
If we commit to task $1$ first, then task $1$ occupies $[1,2]$.
For CDSRSE, task $2$ is always deleted, since position $2$ is in its support, which is now occupied by task $1$.
\end{example}
\begin{theorem}
    CDSRSE is also NP-hard.
\end{theorem}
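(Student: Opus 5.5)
The plan is to reuse the reduction from the proof that DSRSE is NP-hard, and to redesign the gadget so that the conservative deletion rule reproduces the random conflicts of the dynamic maximum stable set problem on a star graph~\cite{DNP}. The original gadget cannot be used unchanged. There, the center $0$ deterministically occupies $[1,n]$ and each leaf $i$ has support $\{i,n+i\}$. Under CDSRSE, committing $0$ deletes every leaf with certainty, because slot $i$ lies in leaf $i$'s support. Conflicts must therefore be randomized through the realization of the \emph{committed} task, not through the support of the remaining task.

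\textbf{Gadget.} I will use $2n$ slots. The center $0$ deterministically occupies $[1,n]$, and leaf $i$ occupies slot $i$ with probability $p_i$ and slot $n+i$ with probability $1-p_i$. The leaves have pairwise disjoint supports, so they never interact. With these choices the conservative rule acts as follows.
\begin{itemize}
\item Committing leaf $i$ deletes the center exactly when the leaf realizes at slot $i$, which happens with probability $p_i$, independently across leaves.
\item Committing the center deletes all remaining leaves.
\end{itemize}
Any policy is then described by the set $A$ of leaves it commits before the center. There are two outcomes.
\begin{itemize}
\item If some leaf in $A$ deletes the center, the policy simply collects every leaf.
\item Otherwise it takes the center and loses $N\setminus A$.
\end{itemize}
Because the policy adapts, it commits leaves one at a time and stops to take the center only while the center survives. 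So the center is taken only when all of $A$ realized in the second half, and the expected value is
\[
\sum_{i\ne 0} w_i \;-\; \prod_{i\in A}(1-p_i)\Bigl(\sum_{i\notin A,\,i\neq 0} w_i - w_0\Bigr)^{-}
\]
or a closely related expression. Ordering within $A$ is irrelevant. The main point is that the optimal value is governed by a subset-selection problem that trades off products of $(1-p_i)$ against sums of weights.

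\textbf{Key steps.} (i) Write this closed form carefully, checking that an adaptive policy cannot do better than a threshold/subset rule; an exchange argument over the order of leaf commitments should suffice. (ii) Either identify the resulting problem with the star instance of~\cite{DNP}, which would finish immediately, or reduce a known NP-complete problem to it directly. For the direct route I would use \textsc{Partition} or \textsc{Subset Product}. I would choose the $p_i$ and $w_i$ so that deciding whether some subset reaches a target value is equivalent to finding a subset whose product of $(1-p_i)$ and weight sum hit prescribed targets. Encoding integers via $1-p_i = 2^{-a_i}$ keeps the products polynomially representable. (iii) Confirm the instance has polynomial size, with $2n$ slots, $n+1$ tasks and rational probabilities.

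\textbf{Main obstacle.} I expect step (ii) to be the hardest. The conservative rule makes the center-first action deterministic, so the induced problem may not coincide exactly with the star problem of~\cite{DNP}, in which committing the center reveals independent random edges. My first attempt would be to match the objective of the cited hardness result with the subset formula above. If that fails, I would fall back on the direct reduction from \textsc{Subset Product}, with weights tuned so that the optimal value exceeds a threshold if and only if a subset has product exactly equal to the target.
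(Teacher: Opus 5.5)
Your gadget and the subset structure you describe are right, but the proposal stops where the hardness argument begins. Step (ii) is the entire content of the proof, and you leave it open. Your ``redesigned'' gadget is in fact exactly the DSRSE gadget, which the paper reuses unchanged. The fact that committing $0$ deletes every leaf does not rule that gadget out. It only means the induced problem is not the star problem of~\cite{DNP}, so your first route in (ii) does fail, as you suspected. The induced problem is the paper's formulation (Conserv-JS): committing the leaf set $A$ before trying $0$ yields
\[
\sum_{i\neq 0} w_i+\prod_{i\in A}(1-p_i)\Bigl(w_0-\sum_{j\notin A,\,j\neq 0} w_j\Bigr),
\]
maximized over all $A$. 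No negative part is needed. With the usual convention $(y)^-=\max\{-y,0\}$, your displayed expression has the wrong sign.

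The missing idea is to choose parameters so that the product and the bracket are governed by the same sum. Take a \textsc{Partition} instance $a_i>0$ normalized so that $\sum_i a_i=2$, and set $w_i=a_i$, $w_0=2$, $1-p_i=e^{-a_i}$. With $x:=\sum_{i\in A}a_i$, the bracket equals $x$ and the product equals $e^{-x}$, so the value is $2+xe^{-x}$. Since $xe^{-x}$ is uniquely maximized at $x=1$, the optimum equals $2+e^{-1}$ if and only if some subset sums to exactly half the total.

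Your coupling $1-p_i=2^{-a_i}$ points in this direction but fails as stated, for two reasons. First, $x2^{-x}$ is maximized at $x=1/\ln 2$, not at half the total, so the base must be matched to the normalization. Second, \textsc{Partition} is only weakly NP-hard, so writing $2^{-a_i}$ exactly for the integer data needs $a_i$ bits, which is not polynomial. If you want rational data, approximate $e^{-a_i}$ to $O(\log(nT))$ bits, where $2T$ is the integer total. This suffices because the value at $x=1$ beats every other achievable $x$ by $\Omega(1/T^2)$.
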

\begin{proof}
    As in the previous proof, consider the instance with $n+1$ tasks and $2n$ slots. 
    Task $0$ occupies slots $1,\ldots,n$ with probability $1$, and task $i$ occupies slot $i$ with probability $p_i$ and $n+i$ with probability $q_i:=1-p_i$.
    As in \cite{DNP}, any solution for this problem is determined by when to select task $0$, and what is selected before trying to select task $0$.
    Then the problem can be formulated as
    \begin{equation}
    \label{Conserv-JS}
    \tag{Conserv-JS}
    \max_{S\subseteq N}\left\{\sum_{i\in S}w_i+(1-\prod_{i\in S}q_i)\sum_{j\notin S}w_j+\prod_{i\in S}q_iw_o\right\}.
    \end{equation}
    By rearranging, we get 
    \[\max_{S\subseteq N}\left\{\sum_{i\in N}w_i+\prod_{i\in S}q_i(w_0-\sum_{j\notin S}w_j)\right\},\]
    and by dropping constant terms and applying the natural logarithm, we get 
    \[\max_{S\subseteq N}\left\{\sum_{i\in S}\ln q_i+\ln(w_0-\sum_{j\notin S}w_j)\right\}.\]
    Given a set of numbers $N$ with positive weights $a_i$ , $i \in N$ , the partitioning problem asks for a subset $S \subseteq N$ such that $\sum_{i\in S}a_i = \sum_{j\notin S} a_j$; without loss of generality, we assume that $\sum_{i\in N} a_i = 2$.
    By setting parameters of the problem above as $q_i = e^{-a_i}$, $p_i = 1-e^{-a_i}$, $w_i =a_i$,and $w_0 = \sum_{i\in N} a_i$,
    \[\max_{S\subseteq N}\left\{-\sum_{i\in S}a_i+\ln(\sum_{i \in S}a_i)\right\} .\]
Proceeding similarly to~\cite{Star-Graph-NP-hard}, there is a set $S \subseteq N$ with $\sum_{i\in S}a_i=1$ if and only if the optimum of this problem is $-1$.
\end{proof}

Unlike DSRSE, in CDSRSE the distributions $D_i^s, D_i^e$ are not updated for remaining tasks.
Thus, $D_i^s,D_i^e$ are independent of the stage $t$ when $i$ is scheduled; $x_{ik}^{ts}$ can always be expressed as $x_{ik}^{ts}=P_{ik}^{s}x^t_i$, and similarly for $x_{ik}^{te}, x_{ik}^{to}$.
Therefore, we can express the constraint $\Pr[\text{$k$ occupied by any $i$}]\leq 1$ as $\sum_{i=1}^{n}P_{ik}^o x_i\leq 1$.
Consider the LP relaxation and its dual,
\begin{multicols}{2}
\noindent
\begin{align*}
\label{CDLP-P}
\tag{CDLP-P}
\begin{split}
    \max\quad~&\sum_{i=1}^{n}w_ix_i\\
    & \sum_{i=1}^{n}P_{ik}^o x_i\leq1\\
    &x\geq 0,
    \end{split}
\end{align*}   
\columnbreak
\begin{align*}
\label{CDLP-D}
\tag{CDLP-D}
\begin{split}
    \min~&\sum_{r=1}^{m}\mu_r\\
    &\sum_{r=1}^{m}P^o_{ir}\mu_r\geq w_i,~\forall i\in N\\
    &\mu\geq 0.
    \end{split}
\end{align*}
\end{multicols}
\noindent
Let $p^*:=\min\{P_{ir}^o:P_{ir}^o>0, i\in N,r\in M\}$.
As above, consider the pessimistic interval graph $G_\pes$ and the corresponding optimal solution $\mu_{\pes}$ of~\eqref{LP-D} defined on $G_\pes$.
Let $\hat{\mu}:=\mu_{\pes}/p^*$; then for each $i\in N$,
\[\sum_{r}P_{ir}^o\hat{\mu}_r\geq \sum_{r\in [a_i,d_i]} [\mu_{\pes}]_r\geq w_i.\]
Thus $\hat{\mu}$ is feasible for \eqref{CDLP-D} and $\sum_{r}\hat{\mu}_r = \frac{1}{p^*}\alpha_{\pes}.$
We have $\alpha_{\pes}\leq OPT\leq \frac{1}{p^*}\alpha_{\pes}$.
However, this bound is weak when $p^*$ is small.
\begin{remark}
    We emphasize that $\sum_{i=1}^{n}P_{ik}^{o}x_i\leq 1$ is not a valid inequality for the original DSRSE, because the distribution of start and end times of $i$ depends on the stage at which it is committed. 
Consider Example~\ref{eg:1} and a policy that always schedules task $1$ first and then task $2$ if possible.
Under this policy, the probability that slot $2$ is occupied is $1\cdot x_1 = 1$, but $P_{12}^ox_1+P_{22}^o x_2= 1\cdot x_1+\frac{1}{2}x_2=1+\frac{1}{4}>1$.
For Example~\ref{eg:2}, after task $1$ is scheduled, the probability of scheduling task $2$ is $0$, so $P_{12}^ox_1+P_{22}^o x_2= 1$.
\end{remark}
By the same argument as in Section~\ref{section:DSRSE}, any feasible solution to~\eqref{CDLP-D} is feasible to~\eqref{LP-Dpes}. Hence, the optimal value of CDSRSE is an upper bound for $\alpha_\pes$. Therefore, for DSRSE and CDSRSE, if $\operatorname{OPT}_{\operatorname{DSRSE}}$ and $\operatorname{OPT}_{\operatorname{CDSRSE}}$ denote their respective optimal values, then
\begin{align*}
\sum_r [\mu_{\pes}]_r
\leq {}& \operatorname{OPT}_{\operatorname{CDSRSE}} \\
\leq {}& \operatorname{OPT}_{\operatorname{DSRSE}} \\
\leq {}& \sum_r [\mu_{\pes}]_r
\left(
1
+\sum_{i: r\in S_i} (1-P_{ir}^o)
+\sum_{i: r\in E_i} (1-P_{ir}^o)
\right).
\end{align*}
Thus the gap between the two optimal values is bounded by
\begin{align*}
&\operatorname{OPT}_{\operatorname{DSRSE}}
-
\operatorname{OPT}_{\operatorname{CDSRSE}}\\
\leq
&\sum_r [\mu_{\pes}]_r
\left(
\sum_{i: r\in S_i} (1-P_{ir}^o)
+
\sum_{i: r\in E_i} (1-P_{ir}^o)
\right).
\end{align*}
However, this bound can be loose. For example, suppose that $ S_i \cup E_i = M $ for each task $i$; in other words, a small amount of probability mass for the task's start and/or end time is spread over the entire set $M = [m]$. Then every position $r\in[m]$ belongs to the support of each task, and the terms $1-P_{ir}^o$ can be close to one for many pairs $(i,r)$. In this case, the bound above can be close to
\(
n\sum_r [\mu_\pes]_r .
\)
The bound is more informative when the start and end time distributions are sufficiently localized.

\section{Uniform Weights}

In this section, we consider the case where task weights are uniform, $ w_i = 1 $ for $ i \in N$.
Let $\alpha_{\pes}$ be the optimal value of the pessimistic interval graph; 
for $G_\pes$, because of the cardinality objective there is a minimum clique cover, a set $\mathcal{J}\subseteq M$ with $ \lvert \mathcal{J} \rvert = \alpha_{\pes} $ such that each task intersects with exactly one element in $\mathcal{J}$; this clique cover can be derived from \eqref{LP-D}. 
Specifically, for each task $i$ there exists exactly one $k\in \mathcal{J}$ such that $k\in[a_i,d_i]$.
Let $x$ be an optimal solution of~\eqref{DLP-P}, and $\alpha$ be its optimal value;
by \eqref{eq:2}, we have for each $k\in\mathcal{J}$,
\begin{align*}
   \alpha = \sum_{t}\sum_{i} x_i^t \leq &1+\sum_{t}\sum_{i, P_{ik}^o = 0} x_i^t\\
   &+\sum_t \left(\sum_{i,a_i\leq k\leq b_i}\sum_{\ell = k+1}^{b_i}x_{i\ell}^{ts}+\sum_{i,c_i\leq k\leq d_i}\sum_{\ell =c_i}^{k-1}x_{i\ell}^{te}\right)\\
   =& 1+\sum_{i,a_i\leq k\leq b_i}\sum_{\ell = k+1}^{b_i}x_{i\ell}^{s}+\sum_{i,c_i\leq k\leq d_i}\sum_{\ell =c_i}^{k-1}x_{i\ell}^{e}\\
   &+\sum_{i, P_{ik}^o = 0} x_i.
\end{align*}
Let $k_i$ be the unique element in $\mathcal{J}$ covering $i\in[n]$.
Summing both sides over $\mathcal{J}$, we get
\begin{align*}
    \alpha*\alpha_{\pes}\leq  &\alpha_{\pes}+\sum_{k\in\mathcal{J}}\left(\sum_{i,a_i\leq k\leq b_i}\sum_{\ell = k+1}^{b_i}x_{i\ell}^{s}+\sum_{i,c_i\leq k\leq d_i}\sum_{\ell =c_i}^{k-1}x_{i\ell}^{e}\right)\\
    &+(\alpha_{\pes}-1)\sum_{i}x_i\\
    \iff
    \alpha &\leq \alpha_{\pes}+\sum_{k\in\mathcal{J}}\left(\sum_{i,a_i\leq k\leq b_i}\sum_{\ell = k+1}^{b_i}x_{i\ell}^{s }+\sum_{i,c_i\leq k\leq d_i}\sum_{\ell =c_i}^{k-1}x_{i\ell}^{e}\right)\\
    &=\alpha_{\pes}+\sum_{i}\sum_{\ell=k_i+1}^{b_i}x_{i\ell}^{s}+\sum_{\ell=c_i}^{k_i-1}x_{i\ell}^{e}\\
    &\leq \alpha_{\pes}+\sum_i (1-P_{ik_i}^o) ,
\end{align*}
where the first inequality and the equality stem from the fact that $i$ is not covered by $\mathcal{J}\setminus\{k_i\}$, whose cardinality is $\alpha_\pes-1$. 
The equivalence between inequalities is derived  from the uniform weights, i.e.\ $\sum_{i}x_i = \alpha$.
The last line is from the constraint $\sum_{t}x_{i\ell}^{ts}\leq P_{i\ell}^s$, and the analogous constraint for $x_{i\ell}^e$.

For CDSRSE, we have 
$x_{i\ell}^{s}=x_i P_{i\ell}^s $ and $x_{i\ell}^{e}=x_i P_{i\ell}^e$.
Thus we have,
\begin{align*}
\alpha\leq \alpha_{\pes}+\sum_{i}\sum_{\ell=k_i+1}^{b[i]}x_{i\ell}^{s}+\sum_{\ell=c[i]}^{k_i-1}x_{i\ell}^{e} = &\alpha_{\pes}+\sum_{i}(1-P_{ik_i}^o)x_i\\
\leq &\alpha_{\pes}+\alpha*(1-P_{i^*k[i^*]}^{o}),
\end{align*}
where $P_{i^*k[i^*]}^{o}=\min_i \{P_{ik_i}^o\}>0$, so 
\(\alpha\leq \frac{1}{P_{i^*k[i^*]}^{o}}\alpha_{\pes}.\)

\section{Computational Experiments}

In this section, we test the bounds derived above.
We use the following $(n,m)$ combinations: $(8,12)$, $(10,15)$, $(14,21)$, $(16,24)$, $(18,27)$, $(19,29)$, $(20,30)$, $(40,60)$, $(80,120)$.
For each $(n,m)$, we generate $30$ instances and report average results.  

For each task $i$, we first sample $x,y$ uniformly and independently from $[m]$, and let $a_i=\min\{x,y\}$, $d_i=\max\{x,y\}$.
Similarly, we sample $u,v$ uniformly and independently from $[a_i,d_i]$, and set $b_i = \min\{u,v\}, c_i=\max\{u,v\}$.
The start and end times follow uniform distributions on $[a_i,b_i]$, $[c_i,d_i]$, respectively. Finally, we sample task weights uniformly from $[0, 1]$.

We conducted all of our computational experiments on a laptop computer with an Apple M1 Pro CPU and 16 GB of RAM, solving LPs using HiGHS~\cite{HiGHS} and performing
other computations using Julia.

Since computing the optimal values of DSRSE and CSDRSE is impractical, we estimate the expected maximum-weight schedule $\E[\alpha(G)]$ by repeatedly sampling the start and end times of each task and solving the (deterministic) maximum independent set $1,000$ times; this expected value is an upper bound for the optimal values of DSRSE and CDSRSE.
We compare it with the optimal values of~\eqref{DLP-P},~\eqref{CDLP-P}, and with 
$\alpha_\pes$.
Due to the poor performance of the bounds $\sum_r [\mu_{\pes}]_r\left(1+\sum_{i\text{ s.t. }r\in S_i} (1-P_{ir}^o)+\sum_{i\text{ s.t. }r\in E_i} (1-P_{ir}^o)\right)$ and $\alpha_\pes/p^*$, we do not include them in the plots.

On the primal side, we consider the heuristic that picks the remaining task with the largest ratio $w_i/\E[\text{length of }i] = w_i/\sum_{r}P_{ir}^o$, which can be interpreted as the weight gained per expected occupied position; we call this heuristic \emph{(C)DSRSE ratio}.
We also consider the heuristic that, in stage $t$, maximizes $ w_i-\sum_{\text{available }j\neq i}\Pr[j\text{ deleted by }i\lvert i\text{ selected at $t$}] w_j$.
That is, we maximize the net gained weight, accounting for potential lost weight from conflicts;
we call this heuristic \emph{(C)DSRSE weight}.
We finally consider the heuristic that repeatedly solves~\eqref{DLP-P},~\eqref{CDLP-P} and picks $i$ with the largest $x_i^1$ and $x_i$ for DSRSE and CDSRSE respectively . We call this heuristic \emph{(C)DSRSE adaptive LP.}
For each instance, we run 1,000 simulations per policy to estimate expected performance.

We test four sets of instances for each $(n,m)$ pair.
We first generate $n$ instances and treat this as a long, dense instance.
Then we consider a sparser instance where we only keep the first $n/2$ of the generated tasks. 
We also double the expected length of each task by doubling $[b_i,c_i]$ from its midpoint first and then doubling $[a_i,b_i]$ and $[c_i,d_i]$ from new $b_i$ and $c_i$; $[m]$ is extended if necessary to accommodate all tasks.
For this case, we also consider both dense and sparse instances of $n$ and $n/2$ tasks, respectively.

In Figure \ref{fig:DSRSE}, we present the value of each bound and heuristic on DSRSE instances after being normalized by the expected stability number.
DLP-P denotes the optimal value of~\eqref{DLP-P}, DSRSE-weight is the expected value of the greedy policy that schedules the largest-gained-weight task, and DSRSE-ratio is the expected value of the greedy policy that schedules the largest $w_i/\E[\text{length of }i]$; $\alpha_\pes$ denotes the optimal value of the pessimistic realization and expected\_stab denotes the expected stability number.
The \eqref{DLP-P} bound is slightly looser than the expected stability number for these instances, with performance gradually worsening as the instance size increases; on average, it is roughly $7\%$ above. 
We observe similar behavior for both heuristics; in addition, DSRSE-weight consistently outperforms DSRSE-ratio. On average, the former's gap is $4.5\%$ and the latter's is $8.9\%$. 
For the adaptive LP heuristic, due to excessive computational time, we omit results for the \((80,120)\) instance class. Otherwise, the heuristic has a \(9.37\%\) average gap with the expected stability number.
The pessimistic stability number, $\alpha_\pes$, has an average gap with the expected stability number of almost $16\%$.
Figure~\ref{fig:CDSRSE} presents analogous results for CDSRSE. 
In this case, the bound \eqref{CDLP-P} is uniformly better than the expected stability number, with values $5\%$ lower on average.
The heuristics do not perform as well because conflicts are enforced more conservatively in this model. DSRSE-weight is still usually superior to DSRSE-ratio, but the latter does better for the largest dense instances. The heuristics' average gaps with respect to \eqref{CDLP-P} are now $9\%$ and $14.7\%$.
For the adaptive LP heuristic, we again omit results for the $(80,120)$ instances; it performs well otherwise, with an average gap of $9.67\%$.
As we observe in Section~\ref{section:Conservatice DSRSE}, $x_i$ in~\eqref{CDLP-P} is stage independent and represents the overall likelihood of selecting $i$. This intuition may indicate why adaptive LP is a good heuristic in this case.


\newpage
\begin{figure}[H]
    \centering
    \begin{subfigure}[h]{0.49\textwidth}
    \includegraphics[width=\textwidth]{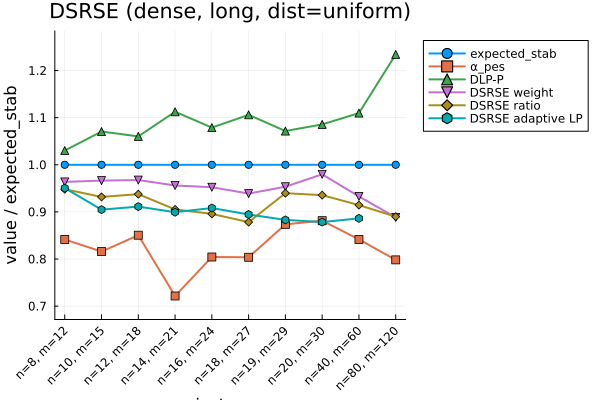}
    \caption{DSRSE: intervals with uniform start and end times, long expected length and are densely packed across the slots.}
    \label{fig:DSRSE-Uni}
    \end{subfigure}
    \hfill
\begin{subfigure}[h]{0.49\textwidth}
        \centering
        \includegraphics[width=\textwidth]{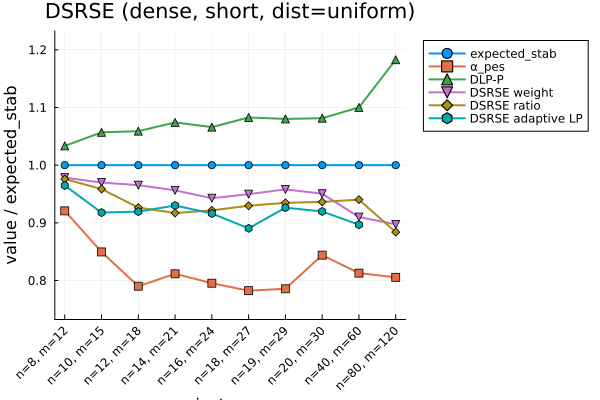}
        \caption{DSRSE: intervals with uniform start and end times, short expected length and are densely packed across the slots.}
        \label{fig:DSRSE-Gau}
\end{subfigure}
    \begin{subfigure}[h]{0.49\textwidth}
    \includegraphics[width=\textwidth]{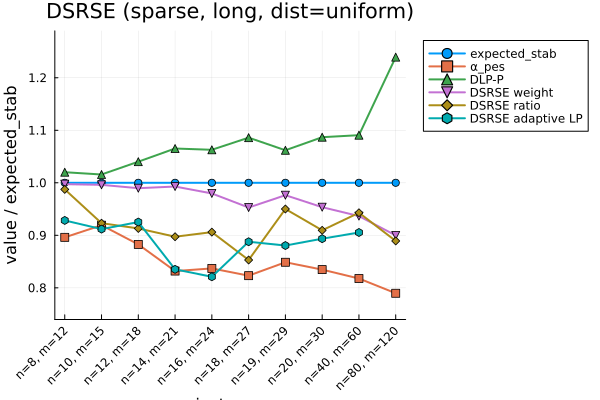}
    \caption{DSRSE: intervals with uniform start and end times, long expected length and are sparsely packed across the slots.}
    \label{fig:DSRSE-Uni}
    \end{subfigure}
    \hfill
\begin{subfigure}[h]{0.49\textwidth}
        \centering
        \includegraphics[width=\textwidth]{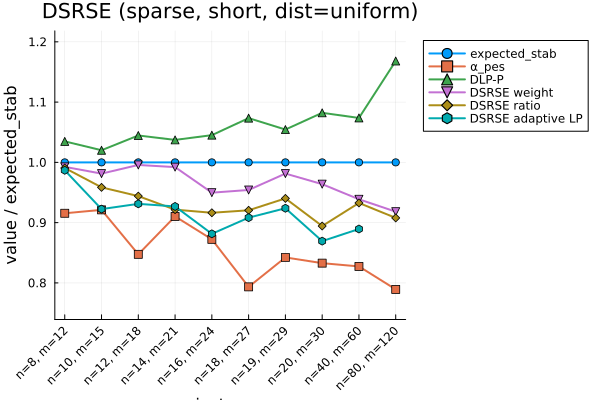}
        \caption{DSRSE: intervals with uniform start and end times, short expected length and are sparsely packed across the slots.}
        \label{fig:DSRSE-Gau}
\end{subfigure}
\caption{DSRSE}
        \label{fig:DSRSE}
\end{figure}

\begin{figure}[H]
    \centering
    \begin{subfigure}[h]{0.49\textwidth}
    \includegraphics[width=\textwidth]{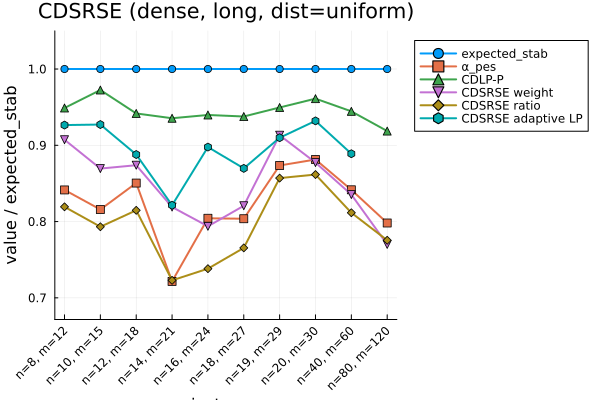}
    \caption{CDSRSE: intervals with uniform start and end times, long expected length and are densely packed across the slots.}
    \label{fig:CDSRSE-Uni}
    \end{subfigure}
    \hfill
\begin{subfigure}[h]{0.49\textwidth}
        \centering
        \includegraphics[width=\textwidth]{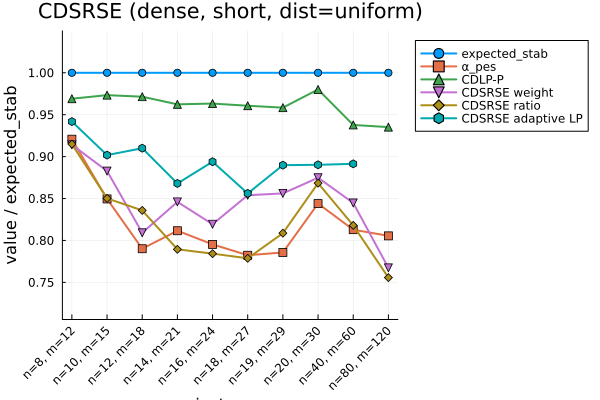}
        \caption{CDSRSE: intervals with uniform start and end times, short expected length and are densely packed across the slots.}
        \label{fig:CDSRSE-Gau}
\end{subfigure}
    \begin{subfigure}[h]{0.49\textwidth}
    \includegraphics[width=\textwidth]{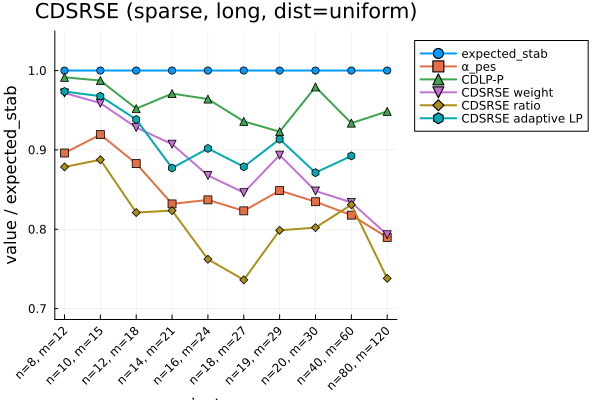}
    \caption{CDSRSE: intervals with uniform start and end times, long expected length and are sparsely packed across the slots.}
    \label{fig:CDSRSE-Uni}
    \end{subfigure}
    \hfill
\begin{subfigure}[h]{0.49\textwidth}
        \centering
        \includegraphics[width=\textwidth]{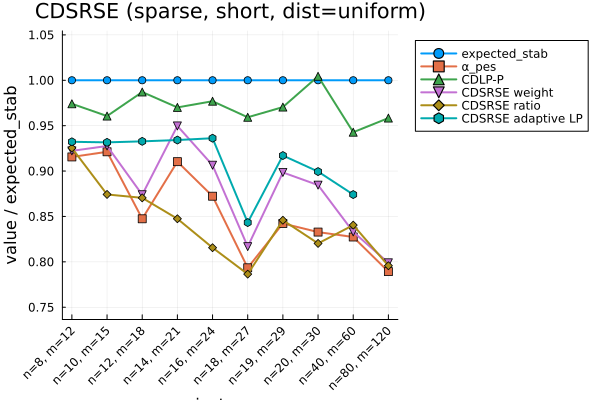}
        \caption{CDSRSE: intervals with uniform start and end times, short expected length and are sparsely packed across the slots.}
        \label{fig:CDSRSE-Gau}
\end{subfigure}
\caption{CDSRSE}
        \label{fig:CDSRSE}
\end{figure}
\bibliographystyle{siam}
\bibliography{SDPVFA}

\end{document}